
\documentclass{amsart}

\usepackage{hyperref}
\usepackage{amsthm,latexsym,amssymb,amsmath}
\usepackage{lscape,array}
\usepackage{times,mathptmx}
\sloppy
\parskip2ex plus1ex minus0.5ex
\parindent0mm

\renewcommand{\H}{\mathcal{H}}
\newcommand{\F}{\mathcal{F}}
\newcommand{\D}{\mathcal{D}}

\newcommand{\R}{\mathbb{R}}
\newcommand{\C}{\mathbb{C}}
\newcommand{\N}{\mathbb{N}}
\newcommand{\<}{\left\langle}
\renewcommand{\>}{\right\rangle}
\renewcommand{\phi}{\varphi}
\newcommand{\na}{\nabla}

\newcommand{\eps}{\varepsilon}
\newcommand{\vol}{\mathrm{vol}}

\newcommand{\res}{\mathrm{res}}
\newcommand{\supp}{\operatorname{supp}}
\newcommand{\oo}{\mathrm{o}}
\newcommand{\OO}{\mathrm{O}}

\newtheorem{thm}{Theorem}
\newtheorem{cor}[thm]{Corollary}

\newtheorem{lem}[thm]{Lemma}

\theoremstyle{definition}
\newtheorem{ex}[thm]{Example}
\newtheorem{rem}[thm]{Remark}
\newtheorem{dfn}[thm]{Definition}



\long\def\symbolfootnote[#1]#2{\begingroup%
\def\thefootnote{\fnsymbol{footnote}}\footnote[#1]{#2}\endgroup} 


\title{Some properties of solutions to weakly hypoelliptic equations}
\author{Christian B\"ar}
\address{Universit\"at Potsdam, Institut f\"ur Mathematik, Am Neuen Palais 10, 14469 Potsdam, Germany}
\email{baer@math.uni-potsdam.de}
\urladdr{http://geometrie.math.uni-potsdam.de/}

\keywords{hypoelliptic operators, weakly hypoelliptic operators, hypoelliptic estimate, Montel theorem, Vitali theorem, Liouville theorem, removable singularity}
\subjclass[2010]{Primary 35H10; Secondary 35B53}

\begin{document}

\begin{abstract}
A linear different operator $L$ is called weakly hypoelliptic if any local solution $u$ of $Lu=0$ is smooth. 
We allow for systems, that is, the coefficients may be matrices, not necessarily of square size.
This is a huge class of important operators which cover all elliptic, overdetermined elliptic, subelliptic and parabolic equations.

We extend several classical theorems from complex analysis to solutions of any weakly hypoelliptic equation: the Montel theorem providing convergent subsequences, the Vitali theorem ensuring convergence
of a given sequence and Riemann's first removable singularity theorem.
In the case of constant coefficients we show that Liouville's theorem holds, any bounded solution must be constant and any $L^p$-solution must vanish.
\end{abstract}

\maketitle

\section{Introduction}

Hypoelliptic partial differential equations form a huge class of linear PDEs many of which are very important in applications.
This class contains all elliptic, overdetermined elliptic, subelliptic and parabolic equations.
Recall that a linear differential operator $L$ is called hypoelliptic if any solution $u$ to $Lu=f$ is smooth wherever $f$ is smooth.
The study of hypoelliptic operators was initiated by H\"ormander and others, see e.g.\ \cite{H01,H02,M,Tr}.

We generalize this class of operators even further by only demanding that any solution $u$ to $Lu=0$ be smooth.
We call such operators \emph{weakly hypoelliptic}.
This is not to be confused with partially hypoelliptic operators as introduced by G\aa rding and Malgrange \cite{GM} nor with the almost hypoelliptic operators due to Elliott \cite{E}.
We show by example that the class of weakly hypoelliptic operators is strictly larger than that of hypoelliptic operators.
The example of a weakly hypoelliptic but non-hypoelliptic operator that we give is defined on $\R^2$ and is overdetermined elliptic on $\R^2\setminus \{0\}$.
It is of first order and its principal symbol vanishes at $0$.
Thus the class of weakly hypoelliptic operators allows for a certain degeneracy of the principal symbol on ``small sets'' and might be of interest for geometric applications.

Holomorphic functions are the solutions to the Cauchy-Riemann equations which are elliptic in the case of one variable and overdetermined elliptic in the case of several variables.
In any case, they are characterized as solutions to certain hypoelliptic PDEs.
We show that the solutions to any weakly hypoelliptic equation share some of the nice properties of holomorphic functions which are familiar from classical complex analysis.

Montel's theorem says that a locally bounded sequence of holomorphic functions subconverges to a holomorphic function. 
This does not hold for real-analytic functions.
For instance, the sequence $u_j(x)=\cos(jx)$ is a uniformly bounded sequence of real-analytic functions on $\R$, but does not have a convergent subsequence, see e.g.\ \cite[Ex.~1.4.34]{S}.
We show that even a slightly stronger version of the Montel theorem holds for solutions to any weakly hypoelliptic equation:
Any locally $L^1$-bounded sequence subconverges in the $C^\infty$-topology to a solution (Theorem~\ref{thm:montel}).
The Vitali theorem for holomorphic functions has a similar generalization (Theorem~\ref{thm:vitali}).
For hypoelliptic equations this has been known for many decades and has motivated the study of so-called Montel spaces in functional analysis.

In case the underlying domain is $\R^n$ and the weakly hypoelliptic operator has constant coefficients and satisfies a weighted homogeneity condition, we show that the Liouville theorem holds: any bounded solution must be constant (Theorem~\ref{thm:liouville}) and any $L^p$-solution must be zero (Theorem~\ref{thm:Lp-liouville}).
This applies to powers of the Laplace and Dirac operators but also to powers of the heat operator.
In the proof we use a simple scaling argument and apply the general Montel theorem.

Finally, we generalize Riemann's first removable singularity theorem and show that a solution to a weakly hypoelliptic equation can be extended across a submanifold $S$ of sufficiently high codimension provided the solution is locally bounded near $S$ (Theorem~\ref{thm:riemann} and Corollary~\ref{cor:riemann}).

The general setup is such that we consider a linear differential operator $L$ acting on sections of vector bundles. 
So, locally $L$ describes a system of linear PDEs with smooth coefficients.
These coefficients may be matrices of not necessarily square size.
Readers who are not too fond of geometric terminology may simply replace ``manifolds'' by ``open subsets of $\R^n$'' and ``sections of vector bundles'' by ``vector-valued functions''.

The classical proofs of these theorems for holomorphic functions are typically based on special properties of holomorphic functions such as Cauchy's integral formula.
Therefore they may create the misleading impression that these theorems are also very special for holomorphic functions.
In the contrary, the above mentioned theorems remain true for all solutions of the largest class of linear PDEs where one could hope for them to hold.
Moreover, as we will see, the proofs of the general statements are actually rather simple.

\emph{Acknowledgment.}
The author likes to thank Thomas Krainer and Elmar Schrohe for helpful discussions.
Moreover, he thanks Sonderforschungsbereich 647 funded by Deutsche Forschungsgemeinschaft for financial support.

\section{Weakly Hypoelliptic Operators}

Let $M$ be an $n$-dimensional differentiable manifold equipped with a smooth positive $n$-density $\vol$.
Let $E\to M$ and $F\to M$ be real vector bundles.
If the bundles are complex we simply forget the complex structure and consider them as real bundles.
We denote the spaces of smooth sections by $C^\infty(M,E)$ and $C^\infty(M,F)$, respectively.
Let $L: C^\infty(M,E) \to C^\infty(M,F)$ be a linear differential operator of order $k\in\N$.
The fact that smooth sections are mapped to smooth sections encodes the smoothness of the coefficients of $L$ in local coordinates.
The operator $L$ restricts to a linear map $\D(M,E) \to \D(M,F)$ where $\D$ stands for compactly supported smooth sections.

Let $E^*\to M$ and $F^*\to M$ be the dual bundles.
Given $L$, there is a unique linear differential operator $L^*: C^\infty(M,F^*) \to C^\infty(M,E^*)$, the \emph{formally dual operator}, characterized by 
\[
\int_M \<Lu,\phi\>\vol = \int_M \<u,L^*\phi\>\vol
\]
for all $u\in C^\infty(M,E)$ and $\phi\in C^\infty(M,F^*)$ such that $\supp(u) \cap \supp(\phi)$ is compact.
Here $\<\cdot,\cdot\>$ denote the canonical pairing of $E$ and $E^*$ and of $F$ and $F^*$.

We extend $L$ to an operator, again denoted by $L$, mapping distributional sections to distributional sections, $L:\D'(M,E) \to \D'(M,F)$ by
\[
(Lu)[\phi] := u[L^*\phi]
\]
for all $u\in\D'(M,E)$ and $\phi\in\D(M,F^*)$, compare e.g.\ \cite[Sec.~1.1.2]{BGP}.
Here we denote by $u[\psi]$ the evaluation of the distribution $u\in\D'(M,E)$ on the test section $\psi\in\D(M,E^*)$.

The differential operator $L$ is called \emph{hypoelliptic} if for any open subset $\Omega\subset M$ and any $u\in\D'(\Omega,E)$ such that $Lu$ is smooth we have that $u$ is smooth.
Since we will be interested in solutions of $Lu=0$ only, we make the following definition:
The differential operator $L$ is called \emph{weakly hypoelliptic} if for any open subset $\Omega\subset M$ any $u\in\D'(\Omega,E)$ satisfying $Lu=0$ must be smooth.
H\"ormander's work \cite[Ch.~III]{H01} (see also the proof of Theorem~2.1 in \cite[p.~63]{T}) shows that for operators with constant coefficients over $M=\R^n$ hypoellipticity and weak hypoellipticity are equivalent, at least in the scalar case, i.e.\ if the coefficients of $L$ are scalars rather than matrices.
It seems likely that this is also true if the coefficients are constant matrices.
In general however, if the coefficients are variable, the class of weakly hypoelliptic operators is strictly larger than that of hypoelliptic operators.

\begin{ex}\label{ex:weak}
Let $M=\R^2$, let $E$ be the trivial real line bundle and $F$ the trivial $\R^2$-bundle.
The operator $L=(L_1,L_2):C^\infty(\R^2,\R) \to C^\infty(\R^2,\R^2)$ is given by 
\[
L_1 = r\frac{\partial}{\partial r}-2 = x\frac{\partial}{\partial x} +  y\frac{\partial}{\partial y}-2,
\quad
L_2 = r\frac{\partial}{\partial \theta} =  -y\frac{\partial}{\partial x} +  x\frac{\partial}{\partial y}.
\]
Here $(x,y)$ are the usual Cartesian coordinates while $(r,\theta)$ denote polar coordinates, $x=r\cos(\theta)$, $y=r\sin(\theta)$.
On $\R^2\setminus\{0\}$ the operator $L$ is overdetermined elliptic (see below) and hence hypoelliptic.
But $L$ is not hypoelliptic on $\R^2$ because $u=r^2\log r$ is not smooth while $Lu=(r^2,0)$ is smooth.

We check that $L$ is weakly hypoelliptic.
Regularity is an issue at the origin only.
Let $u\in\D'(\Omega,\R)$ with $Lu=0$ where $\Omega$ is an open disk in $\R^2$ centered at the origin.
Then $u$ is smooth on $\Omega\setminus\{0\}$.
From $L_1u=0$ we see that $u=\alpha(\theta)r^2$ on $\Omega\setminus\{0\}$ and $L_2u=0$ shows that $\alpha$ does not depend on $\theta$.
Hence $u=\alpha r^2= \alpha\cdot(x^2+y^2)$ on $\Omega\setminus\{0\}$.
Subtracting this smooth function we may w.l.o.g.\ assume that $\supp(u)\subset\{0\}$.
In this case, $u$ is a linear combination of the delta function and its derivatives, 
\[
u=\sum_{i,j}\beta_{ij}\frac{\partial^{i+j}\delta_0}{\partial x^i\partial y^j}.
\]
Fix $i_0$ and $j_0$ and choose a test function $\varphi\in C^\infty_c(\Omega,\R)$ which coincides with the monomial $x^{i_0}y^{j_0}$ on a neighborhood of the origin.
Then we see
\begin{align*}
0
&= 
L_1u[\varphi] \\
&=
u\left[-\frac{\partial (x\varphi)}{\partial x}-\frac{\partial (y\varphi)}{\partial y}-2\varphi\right]\\ 
&=
u[-(i_0+1+j_0+1+2)\cdot\varphi]\\
&= -(4+i_0+j_0)\cdot\sum_{i,j}\beta_{ij} \cdot(-1)^{i+j}\cdot\delta_0\left[ \frac{\partial^{i+j}\varphi}{\partial x^i\partial y^j}\right] \\
&= 
-(4+i_0+j_0)\cdot\beta_{i_0j_0}\cdot (-1)^{i_0+j_0}\cdot i_0!\cdot j_0!.
\end{align*}
Thus $\beta_{i_0j_0}=0$ for all $i_0$ and $j_0$ and therefore $u=0$.
To summarize, we have seen that $L$ is weakly hypoelliptic, but not hypoelliptic.
\end{ex}

For any weakly hypoelliptic operator we denote the kernel of $L:\D'(M,E)\to\D'(M,F)$ by $\H(M,L)\subset C^\infty(M,E)$.

For $j\in\N_0$ and any relatively compact measurable subset $A\subset M$ we define the $C^j$-norm of $u\in C^j(M,E)$ by
\[
\|u\|_{C^j(A)} := \sup_{x\in A}\,\max\left(|\na^ju(x)|,\ldots,|\na u(x)|,|u(x)|\right) .
\]
Here we have tacitly equipped $E$ and the tangent bundle $TM$ with a Riemannian metric and a connection $\na$.
These data induce fiberwise norms and connections on the bundles $T^*M\otimes \cdots \otimes T^*M \otimes E$.
Note that $\na^ju$ is a section of $\underbrace{T^*M\otimes \cdots\otimes  T^*M}_{j\,\,\mathrm{factors}} \otimes E$.
Since $A$ is relatively compact, different choices of metric and connections yield equivalent $C^j$-norms.

If $K\subset M$ is a compact subset, then we denote by $C^j(K,E)$ the set of all restrictions to $K$ of $j$-times continuously differentiable sections, defined on an open neighborhood of $K$.
Equipped with the norm $\|\cdot\|_{C^j(K)}$, $C^j(K,E)$ becomes a Banach space.

Similarly, we define the $L^p$-norm for $1 \leq p < \infty$ by
\[
\|u\|_{L^p(A)}^p := \int_A |u|^p \vol(x) .
\]
Again, different choices of the metric and the volume density yield equivalent $L^p$-norms.
The $L^\infty$-norm extends the $C^0$-norm to the space of all essentially bounded measurable sections.
The starting point are the following hypoelliptic estimates (compare \cite[p.~331, Prop.~2]{M1} for the hypoelliptic case):

\begin{lem}[Hypoelliptic estimates]\label{lem:EllEst}
Let $L$ be weakly hypoelliptic.
Then for any $j\in\N$, for any compact subset $K\subset M$ and any open subset $\Omega\subset M$ containing $K$ there is a constant $C>0$ such that
\begin{equation}
\|u\|_{C^j(K)} \leq C\cdot \|u\|_{L^1(\Omega)}
\label{eq:EllEst}
\end{equation}
for all $u\in\H(M,L)$.
\end{lem}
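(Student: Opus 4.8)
The plan is to deduce \eqref{eq:EllEst} from the closed graph theorem once the appropriate pair of Banach spaces has been identified. First I would make two harmless reductions. Since $\|u\|_{L^1(\Omega')}\le\|u\|_{L^1(\Omega)}$ and $\|u\|_{C^j(K)}\le\|u\|_{C^j(K')}$ whenever $K\subset K'$ and $\Omega'\subset\Omega$, and since on a manifold one can interpose an open set $U$ and a relatively compact open set $\Omega'$ with $K\subset U$, $\overline U\subset\Omega'$ and $\overline{\Omega'}\subset\Omega$, it suffices to prove the estimate with $K$ replaced by $\overline U$ and $\Omega$ replaced by the relatively compact open set $\Omega'$. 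Secondly, the inequality is trivial whenever $\|u\|_{L^1(\Omega')}=\infty$, so it is enough to bound the sections lying in $X:=\{u\in L^1(\Omega',E): Lu=0 \text{ in } \D'(\Omega',E)\}$, to which every $u\in\H(M,L)$ with $\|u\|_{L^1(\Omega')}<\infty$ restricts.

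Next I would check that $X$, equipped with the $L^1(\Omega')$-norm, is a Banach space: $L^1(\Omega',E)$ embeds continuously into $\D'(\Omega',E)$ and $L$ is continuous on distributions, so $X=\ker L\cap L^1(\Omega',E)$ is a closed subspace of the Banach space $L^1(\Omega',E)$. By weak hypoellipticity every $u\in X$ has a smooth representative, and since $\overline U\subset\Omega'$ is compact this representative restricts to an element of the Banach space $C^j(\overline U,E)$. This defines a linear restriction map $\rho\colon X\to C^j(\overline U,E)$, and the assertion to be proved is precisely that $\rho$ is bounded. To apply the closed graph theorem it remains to verify that $\rho$ has closed graph. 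So let $u_m\to u$ in $X$ and $\rho(u_m)\to v$ in $C^j(\overline U,E)$; then $u_m\to u$ in $\D'(\Omega',E)$ and a fortiori in $\D'(U,E)$, while $\rho(u_m)\to v$ uniformly on $\overline U$ and hence $u_m\to v$ in $\D'(U,E)$ as well. Uniqueness of distributional limits gives $u=v$ almost everywhere on $U$, and since $v$ and the smooth representative of $u$ are continuous while $U$ is dense in $\overline U$, we conclude $v=\rho(u)$. The closed graph theorem now provides a constant $C$ with $\|u\|_{C^j(\overline U)}\le C\,\|u\|_{L^1(\Omega')}$ for all $u\in X$, which is the desired estimate.

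I expect no serious obstacle: the argument is essentially pure functional analysis, and the hypotheses enter only through the continuity of $L$ on $\D'$ (making $X$ closed in $L^1$) and through weak hypoellipticity (making $\rho$ well defined into a space of classically differentiable sections at all). The only place where some care is needed is the preliminary reduction to a relatively compact $\Omega'$ together with replacing $K$ by the closure of an open set, since this is exactly what legitimizes the distributional-limit step in the closed-graph verification. A more hands-on alternative would be a Baire category argument on a compact exhaustion of $\H(\Omega',L)$, but the closed graph theorem packages this most economically.
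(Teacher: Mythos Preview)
Your proposal is correct and follows essentially the same route as the paper: realize the $L^1$-kernel of $L$ on $\Omega$ as a closed subspace of $L^1(\Omega,E)$, use weak hypoellipticity to make the restriction map into $C^j(K,E)$ well defined, and apply the closed graph theorem. Your preliminary reduction to a relatively compact $\Omega'$ and to $K=\overline U$ with $U$ open is a mild refinement that makes the closed-graph verification (passing from a.e.\ equality to pointwise equality of continuous sections) cleaner than the paper's version, but the core argument is identical.
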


\begin{proof}
Let $V$ be the kernel of the continuous linear map $L:L^1(\Omega,E) \subset \D'(\Omega,E) \to \D'(\Omega,F)$.
Hence $V$ is a closed subspace of $L^1(\Omega,E)$ and thus a Banach space with the norm $\|\cdot\|_{L^1(\Omega)}$.
Since $L$ is weakly hypoelliptic we have $V\subset C^\infty(\Omega,E)$.
Thus we get the linear restriction map $\res:V\to C^j(K,E)$, $u\mapsto u|_K$.

This map is closed.
Namely, let $u_i\to u$ with respect to $\|\cdot\|_{L^1(\Omega)}$ and $\res(u_i) \to v$ with respect to $\|\cdot\|_{C^j(K)}$.
Then we also have $\res(u_i) \to v$ with respect to $\|\cdot\|_{L^1(K)}$ and therefore $\res(u)=v$ which proves closedness.

The closed graph theorem implies that $\res$ is bounded which is \eqref{eq:EllEst} for all $u\in\H(\Omega,L)$.
In particular, \eqref{eq:EllEst} holds for all  $u\in\H(M,L)$.
\end{proof}

\begin{cor}
Let $L$ be a weakly hypoelliptic operator over a compact manifold $M$ (without boundary).
Then $\H(M,L)$ is finite-dimensional.
\end{cor}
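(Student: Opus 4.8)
The plan is to deduce finite-dimensionality from the hypoelliptic estimate of Lemma~\ref{lem:EllEst} together with the Arzel\`a--Ascoli theorem and Riesz's classical lemma that a normed vector space whose closed unit ball is compact is necessarily finite-dimensional.

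First, since $M$ is compact, I would apply Lemma~\ref{lem:EllEst} with $j=1$ and $K=\Omega=M$, obtaining a constant $C>0$ with $\|u\|_{C^1(M)}\le C\,\|u\|_{L^1(M)}$ for all $u\in\H(M,L)$. Combining this with the trivial bound $\|u\|_{L^1(M)}\le\vol(M)\cdot\|u\|_{C^0(M)}$ shows that the norms $\|\cdot\|_{C^0(M)}$, $\|\cdot\|_{L^1(M)}$ and $\|\cdot\|_{C^1(M)}$ are pairwise equivalent on the vector space $\H(M,L)$. Next I would observe that $\H(M,L)$ is a closed subspace of the Banach space $C^0(M,E)$: if $u_i\in\H(M,L)$ with $u_i\to u$ in $C^0(M,E)$, then $u_i\to u$ in $L^1(M,E)$ as well (again using compactness of $M$), and since $L\colon L^1(M,E)\subset\D'(M,E)\to\D'(M,F)$ is continuous with $Lu_i=0$, we get $Lu=0$; weak hypoellipticity then yields $u\in C^\infty(M,E)$, hence $u\in\H(M,L)$. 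Thus $\H(M,L)$, equipped with $\|\cdot\|_{C^0(M)}$, is a Banach space.

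Finally, let $B=\{u\in\H(M,L):\|u\|_{C^0(M)}\le 1\}$ be its closed unit ball. By the estimate above $B$ is bounded in $C^1(M,E)$, hence equicontinuous and pointwise bounded, so by Arzel\`a--Ascoli it is relatively compact in $C^0(M,E)$; being also closed (as $\H(M,L)$ is closed in $C^0(M,E)$), $B$ is compact. Riesz's lemma then gives $\dim\H(M,L)<\infty$.

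I do not expect a genuine obstacle here: the only substantial input is the hypoelliptic estimate, which is already available, and everything else is soft functional analysis. The one step deserving a little care is the verification that $\H(M,L)$ is closed in $C^0(M,E)$, i.e.\ that $C^0$-limits of solutions are again solutions; this is handled by the continuity of $L$ on $L^1$-sections together with weak hypoellipticity, exactly as in the closedness argument in the proof of Lemma~\ref{lem:EllEst}.
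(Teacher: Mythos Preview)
Your proposal is correct and follows essentially the same route as the paper: apply the hypoelliptic estimate with $K=\Omega=M$ to make the $C^0$- and $C^1$-norms equivalent on $\H(M,L)$, invoke Arzel\`a--Ascoli to get compactness of the unit ball, and conclude via Riesz's lemma (the paper phrases the last step as ``the identity map on $\H(M,L)$ is compact''). Your extra verification that $\H(M,L)$ is closed in $C^0(M,E)$ is a nice bit of care that the paper leaves implicit, though strictly speaking relative compactness of the unit ball already suffices for Riesz.
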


\begin{proof}
Since $M$ is compact we can take $K=\Omega=M$ in Lemma~\ref{lem:EllEst}.
Thus the $C^0$-norm and the $C^1$-norm are equivalent on $\H(M,L)$.
By the Arzel\`a-Ascoli theorem the embedding $C^1(M,E)\hookrightarrow C^0(M,E)$ is compact.
Hence the identity map on $\H(M,L)$ is compact, thus $\H(M,L)$ is finite-dimensional.
\end{proof}

A differential operator $L$ is called \emph{elliptic} if the principal symbol $\sigma_L(\xi)$ is invertible for all nonzero covectors $\xi\in T^*M$.
Elliptic regularity theory implies that all elliptic operators are hypoelliptic, see \cite[Thm.~11.1.10]{H2} or \cite[Ch.~III, {\S}5]{LM}.
The class of elliptic operators contains many examples of high importance for applications such as the Laplace and Dirac operator.

More generally, if the principal symbol $\sigma_L(\xi)$ is injective instead of bijective for all nonzero covectors $\xi\in T^*M$, then one calls $L$ \emph{overdetermined elliptic}.
In this case $L^*L$ is elliptic where $L^*$ denotes the formal adjoint of $L$.
Now if $Lu|_\Omega$ is smooth, so is $L^*Lu|_\Omega$ and hence $u|_\Omega$ is smooth by elliptic regularity.
Therefore overdetermined elliptic operators are hypoelliptic as well.

Another way to generalize elliptic operators within the class of hypoelliptic operators is to consider \emph{subelliptic operators with a loss of $\delta$ derivatives} where $\delta\in(0,1)$.
These operators can also be characterized by a condition on their principal symbol, see \cite[Ch.~XXVII]{H4} for details.

If $L$ is \emph{parabolic}, e.g.\ if $L$ describes the heat equation on a Riemannian manifold, then parabolic regularity using anisotropic Sobolev spaces shows that $L$ is hypoelliptic, see e.g.\ \cite[Sec.~6.4]{G}.

In contrast, hyperbolic differential operators, e.g.\ those which describe wave equations, are not hypoelliptic.

Table~1 is a (very incomplete) table of examples for hypoelliptic operators relevant for applications:

\begin{landscape}
\renewcommand{\arraystretch}{1.3}
\begin{tabular}{|>{\centering}m{30mm}|>{\centering}m{28mm}|>{\centering}m{29mm}|>{\centering}m{29mm}|>{\centering}m{30mm}|>{\centering}m{14mm}|c|}
\hline
$L$ & $M$ & $E$ & $F$ & $\H(M,L)$ & type & reference\\
\hline\hline
$\partial/\partial\bar z$, Cauchy-Riemann & open subset of $\C$ & trivial $\C$-line bundle & trivial $\C$-line bundle & holomorphic functions & elliptic & \cite[Sec.~I.5]{FB} \\
\hline
$\bar\partial_E$ & complex manifold & holomorphic vector bundle & $E\otimes T^*M^{0,1}$ & holomorphic sections& overdet.\ elliptic & \cite[Sec.~0.5]{GH}\\
\hline
$\na^*\na+l.o.t.$, Laplace-type & Riemannian manifold & Riem.\ vector bundle with connection & $E$ & harmonic sections & elliptic & \cite[Sec.~2.1]{BGV} \\
\hline
$\Delta^2$, bi-Laplacian & Riemannian manifold & trivial $\R$-line bundle& trivial $\R$-line bundle & biharmonic functions & elliptic & \\
\hline
$D$, Dirac & Riemannian spin manifold & spinor bundle & spinor bundle & harmonic spinors & elliptic & \cite[Ch.~II, {\S}5]{LM} \\
\hline
$\partial/\partial t - \Delta$,\hspace{10mm} heat operator & interval $\times$ Riemannian manifold & trivial $\R$-line bundle& trivial $\R$-line bundle &  & parabolic & \cite[Sec.~6.4]{G}\\
\hline
$\Delta_X$, sub-Laplacian & sub-Riemannian manifold (bracket generating) &  trivial $\R$-line bundle& trivial $\R$-line bundle &  & subelliptic & \cite[Thm.~1.1]{H03}\\
\hline
$\sum_j X_j^*X_j$, where $X_j=$ complex vector fields (bracket cond.) & $\R^n$ &  trivial $\C$-line bundle& trivial $\C$-line bundle &  & Sobolev-subell. & \cite[Thm.~C]{K}\\
\hline
Bismut's hypo- elliptic Laplacian & $T^*X$ &  $\bigoplus_{j=0}^n\bigwedge^jT^*T^*X$ & $E$ &  & hypo\-elliptic & \cite[Ch.~3]{B}\\
\hline
$\partial^2/\partial x^2 + x\partial/\partial y - \partial/\partial t$, Kolmogorov & $(0,\infty)\times\R^2$ & trivial $\R$-line bundle& trivial $\R$-line bundle &  & hypo\-elliptic & \cite[Sec.~22.2]{H3}\\ 
\hline
\end{tabular}
\begin{center}
\textbf{Tab.~1} Examples of hypoelliptic differential operators
\end{center}
\end{landscape}

\section{Convergence Results}

Let $1\leq p \leq \infty$.
A family $\F\subset C^\infty(M,E)$ is called \emph{locally $L^p$-bounded} if for each compact subset $K\subset M$ we have
\[
\sup_{u\in\F}\|u(x)\|_{L^p(K)} < \infty.
\]
Let $1\leq p \leq q < \infty$.
By H\"older's inequality $\|u\|_{L^p(K)} \leq \vol(K)^{\frac{q-p}{pq}}\cdot \|u\|_{L^q(K)}$.
For $q=\infty$ we have $\|u\|_{L^p(K)} \leq \vol(K)^{\frac{1}{p}}\cdot \|u\|_{L^\infty(K)}$.
Therefore local $L^q$-boundedness implies local $L^p$-boundedness whenever $q\geq p$.
In particular, local $L^1$-boundedness is the weakest of these boundedness conditions.

We say that a sequence $(u_j)$ in $C^\infty(M,E)$ converges in the $C^\infty$-topology if the restriction to any compact subset $K\subset M$ converges in every $C^j$-norm.
In other words, the sections and all their derivatives converge locally uniformly.

\begin{thm}[Generalized Montel Theorem]\label{thm:montel}
Let $L$ be a weakly hypoelliptic operator.
Then any locally $L^1$-bounded sequence $u_1,u_2,\ldots\in\H(M,L)$ has a subsequence which converges in the $C^\infty$-topology to some $u\in\H(M,L)$.
\end{thm}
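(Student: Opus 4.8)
The plan is to combine the hypoelliptic estimate of Lemma~\ref{lem:EllEst} with the Arzel\`a--Ascoli theorem and a diagonal extraction. The entire quantitative content is already contained in Lemma~\ref{lem:EllEst}, which upgrades an $L^1$-bound into a $C^j$-bound for \emph{every} $j$; the rest is soft functional analysis.

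First I would exhaust $M$ by compact sets: choose compact subsets $K_1\subset K_2\subset\cdots$ with $K_i\subset\mathrm{int}(K_{i+1})$ and $\bigcup_i K_i=M$, together with relatively compact open sets $\Omega_i$ satisfying $K_i\subset\Omega_i$. By hypothesis $B_i:=\sup_\nu\|u_\nu\|_{L^1(\Omega_i)}<\infty$ for each $i$. Applying Lemma~\ref{lem:EllEst} to the pair $K_i\subset\Omega_i$ with differentiation order $m+1$ yields constants $C_{i,m}>0$ with $\|u_\nu\|_{C^{m+1}(K_i)}\le C_{i,m}B_i$ for all $\nu$ and all $m\in\N$. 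Thus, for fixed $i$ and $m$, the sequence $(u_\nu|_{K_i})$ is bounded in the Banach space $C^{m+1}(K_i,E)$. Having fixed a Riemannian metric and connection, this means the tensor fields $\na^m u_\nu$ are uniformly bounded with uniformly bounded covariant derivatives $\na^{m+1}u_\nu$, hence equicontinuous; by the Arzel\`a--Ascoli theorem the inclusion $C^{m+1}(K_i,E)\hookrightarrow C^m(K_i,E)$ is compact, so $(u_\nu|_{K_i})$ has a subsequence converging in $C^m(K_i,E)$.

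Next I would run a diagonal argument over the countable index set of pairs $(i,m)$: pass to nested subsequences so that the final diagonal subsequence, call it $(u_{\nu_k})_k$, converges in $C^m(K_i,E)$ for every $i$ and every $m$. Since every compact $K\subset M$ lies in some $K_i$, this says precisely that $(u_{\nu_k})$ converges in the $C^\infty$-topology to a limit $u$, and $u$ is smooth because on each $K_i$ it is a $C^m$-limit for all $m$. Finally, $u\in\H(M,L)$: the operator $L$ has order $k$, so $v\mapsto Lv$ is continuous from $C^k(K,E)$ to $C^0(K,F)$ for every compact $K$; since $u_{\nu_k}\to u$ in $C^k$ locally and $Lu_{\nu_k}=0$, we conclude $Lu=0$, i.e.\ $u\in\H(M,L)$.

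I do not anticipate a genuine obstacle. The two places that call for a little care are the equicontinuity step needed to invoke Arzel\`a--Ascoli --- which is exactly why I bound one extra derivative, applying Lemma~\ref{lem:EllEst} with order $m+1$ rather than $m$ --- and the bookkeeping of the diagonal extraction over the pairs $(i,m)$; both are routine.
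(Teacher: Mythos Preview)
Your argument is correct and matches the paper's ``elementary proof'' essentially line for line: exhaust $M$ by compacta, use Lemma~\ref{lem:EllEst} to gain one extra derivative, apply Arzel\`a--Ascoli, and run a diagonal extraction; the paper also records a shorter alternative proof invoking that $C^\infty(M,E)$ is a Montel space. The only cosmetic difference is that the paper diagonalizes first over $j$ on a fixed compact set and then over the exhaustion, whereas you diagonalize over the pairs $(i,m)$ in one go.
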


The experts will notice that this is a direct consequence of Lemma~\ref{lem:EllEst} because we have the following

\begin{proof}[Short proof.]
By Lemma~\ref{lem:EllEst} the sequence $u_1,u_2,\ldots$ is bounded with respect to the $C^\infty$-topology (in the sense of topological vector spaces).
Since $C^\infty(M,E)$ is known to be a Montel space \cite[p.~148, Cor.~2]{T2} so is the closed subspace $\H(M,L)$ (when equipped with the $C^\infty$-topology).
This means that bounded closed subsets are compact, hence $u_1,u_2,\ldots$ has a convergent subsequence.
\end{proof}

For those unfamiliar with the theory of Montel spaces we can also provide the following

\begin{proof}[Elementary proof.]
Let $K\subset M$ be a compact subset.
We choose an open, relatively compact subset $\Omega\Subset M$ containing $K$.
We fix $j\in\N$.
Since, by Lemma~\ref{lem:EllEst},
\[
\|u_\nu\|_{C^{j+1}(K)} \leq C\cdot \|u_\nu\|_{L^1(\Omega)} \leq C\cdot \|u_\nu\|_{L^1(\overline\Omega)} \leq C'
\]
the sequence $(u_\nu)_\nu$ is bounded in the $C^{j+1}$-norm.
By the Arzel\`a-Ascoli theorem there is a subsequence which converges in the $C^j$-norm over $K$.
The diagonal argument yields a subsequence which converges in all $C^j$-norms over $K$, $j\in\N$.

Now we exhaust $M$ by compact sets $K_1 \subset K_2 \subset K_3 \subset \cdots \subset M$.
We have seen that over each $K_\mu$ we can pass to a subsequence converging in all $C^j$-norms.
Applying the diagonal argument once more, we find a subsequence converging in all $C^j$-norms over all $K_\mu$.
Thus we found a subsequence which converges in the $C^\infty$-topology to some $u\in C^\infty(M,E)$.

Since $L: C^\infty(M,E) \to C^\infty(M,F)$ is (sequentially) continuous with respect to the $C^\infty$-topology, we have $Lu=0$, i.e.\ $u\in\H(M,L)$.
\end{proof}

The generalized Montel theorem applies to all examples listed in Table~1.
Even in the case of holomorphic functions (in one or several variables) Theorem~\ref{thm:montel} is a slight improvement over  the classical Montel theorem because the classical condition of local $L^\infty$-boundedness is replaced by the weaker condition of local $L^1$-boundedness.
The standard proof of the classical Montel theorem uses the Cauchy integral formula to show equicontinuity and then applies the Arzel\`a-Ascoli theorem, see e.g.\ \cite[Sec.~1.4.3]{S}.

In the case of harmonic functions on a Euclidean domain the Montel theorem is also classical.
One can use estimates based on the Poisson kernel to show equicontinuity and then apply the Arzel\`a-Ascoli theorem \cite[p.~35, Thm.~2.6]{ABR}.

The Montel theorem provides a criterion for the existence of a convergent subsequence.
The next theorem provides a sufficient criterion which ensures that a given sequence converges itself.

\begin{dfn}
Let $L$ be a weakly hypoelliptic operator on $M$.
A subset $A\subset M$ is called a \emph{set of uniqueness} for $L$ if for any $u\in\H(M,L)$ the condition $u|_A=0$ implies $u=0$.
\end{dfn}

Every dense subset $A$ of $M$ is a set of uniqueness because $\H(M,L)\subset C^0(M,E)$.

For holomorphic function of one variable, i.e.\ $L=\frac{\partial}{\partial \bar z}$, the set $A$ is a set of uniqueness if it has an accumulation point in $M\subset \C$.

Many (but not all) important elliptic operators have the so-called \emph{weak unique continuation property}.
This means that if $A$ has nonempty interior, then it is a set of uniqueness provided $M$ is connected.
Laplace- and Dirac-type operators belong to this class.

\begin{thm}[Generalized Vitali Theorem]\label{thm:vitali}
Let $L$ be a weakly hypoelliptic operator and let $A\subset M$ be a set of uniqueness for $L$.
Let  $u_1,u_2,\ldots\in\H(M,L)$ be a locally $L^1$-bounded sequence.
Suppose that the pointwise limit $lim_{j\to\infty}u_j(x)$ exists for all $x\in A$.

Then $(u_j)_j$ converges in the $C^\infty$-topology to some $u\in\H(M,L)$.
\end{thm}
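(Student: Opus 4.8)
The plan is to combine the generalized Montel theorem (Theorem~\ref{thm:montel}) with the set-of-uniqueness hypothesis via a standard ``two-subsequence'' argument. Since $(u_j)_j$ is locally $L^1$-bounded, Theorem~\ref{thm:montel} tells us that every subsequence of $(u_j)_j$ has a further subsequence converging in the $C^\infty$-topology to some element of $\H(M,L)$. To upgrade this to convergence of the full sequence, I would show that all these subsequential limits coincide. Suppose $(u_{j_k})_k$ and $(u_{j_l'})_l$ are two subsequences converging in $C^\infty$ to limits $u$ and $v$ in $\H(M,L)$, respectively. Convergence in the $C^\infty$-topology implies in particular pointwise convergence, so for every $x\in A$ we have $u(x)=\lim_k u_{j_k}(x)=\lim_{j\to\infty}u_j(x)=\lim_l u_{j_l'}(x)=v(x)$, using that the pointwise limit along $A$ exists by hypothesis and that limits of subsequences of a convergent-at-$x$ sequence agree. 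Hence $u-v\in\H(M,L)$ and $(u-v)|_A=0$; since $A$ is a set of uniqueness for $L$, this forces $u=v$.

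Having established that all subsequential $C^\infty$-limits agree with a single section $u\in\H(M,L)$, it remains to conclude that the full sequence $(u_j)_j$ converges to $u$ in the $C^\infty$-topology. This is a general fact about sequences in a topological space in which the relevant subsets are sequentially compact: if every subsequence has a further subsequence converging to the same limit $u$, then the sequence itself converges to $u$. Concretely, for each compact $K\subset M$ and each $j\in\N$, if $\|u_j-u\|_{C^j(K)}$ did not tend to $0$, we could extract a subsequence bounded away from $u$ in that norm, contradicting the existence within it of a sub-subsequence converging to $u$ in $C^\infty$, hence in $\|\cdot\|_{C^j(K)}$. Since this holds for every $K$ and every $j$, we get $u_j\to u$ in the $C^\infty$-topology.

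The only subtle point worth being careful about is the passage from $C^\infty$-convergence of a subsequence to pointwise convergence on $A$, and the interplay with the assumed pointwise limit: one needs that the scalar sequence $(u_j(x))_j$ is genuinely convergent for $x\in A$ (given), so that \emph{all} of its subsequential limits agree, which is what lets two different $C^\infty$-subsequential limits be identified on $A$. There is no real obstacle here; the argument is essentially formal once Theorem~\ref{thm:montel} and the definition of set of uniqueness are in hand. This mirrors exactly the classical deduction of Vitali's theorem from Montel's theorem in complex analysis, only with ``locally uniformly bounded'' replaced by ``locally $L^1$-bounded'' and ``uniform convergence on compacta'' replaced by ``$C^\infty$-convergence.''
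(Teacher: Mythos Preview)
Your proposal is correct and follows essentially the same route as the paper: apply Theorem~\ref{thm:montel} to get subsequential $C^\infty$-compactness, use the pointwise limit on $A$ together with the set-of-uniqueness hypothesis to force all subsequential limits to coincide, and then invoke the standard ``if every subsequence has a further subsequence converging to $u$, then the whole sequence converges to $u$'' argument. The paper's proof is terser but logically identical; your added remarks about passing from $C^\infty$-convergence to pointwise convergence and the concrete norm-by-norm contradiction are just spelling out what the paper leaves implicit.
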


\begin{proof}
By Theorem~\ref{thm:montel}, every subsequence of $(u_j)_j$ has a subsequence for which the assertion holds.
The limit functions for these subsequences are in $\H(M,L)$ and coincide on $A$, hence they all agree.
Hence $(u_j)_j$ has a unique accumulation point $u\in\H(M,L)$.

If the sequence $(u_j)_j$ itself did not converge to $u$ then we could extract a subsequence staying outside a $C^\infty$-neighborhood of $u$.
But this subsequence would again have a subsequence converging to $u$, a contradiction.
\end{proof}

\section{Liouville Property}

We now concentrate on the case $M=\R^n$.
All vector bundles over $\R^n$ are trivial, so sections can be identified with functions $\R^n\to\R^N$.
Hence the coefficients of the differential operator $L$ are $N\times N'$-matrices.
If these matrices do not depend on the point $x\in\R^n$ we say that $L$ has constant coefficients.
The Laplace operator $\Delta$, the Dirac operator $D$ and the heat operator $\frac{\partial}{\partial x_1}-\sum_{j=2}^n\frac{\partial^2}{\partial x_j^2}$ are examples of hypoelliptic operators on $\R^n$ with constant coefficients.

Let $P$ be a polynomial in $n$ real variables.
Here $P$ is allowed to have matrix-valued coefficients of fixed size.
Let $w=(w_1,\ldots,w_n)$ with $w_j>0$.
We call $P$ \emph{weighted homogeneous} with weight $w$ if $P(t^{w_1}x_1,\ldots,t^{w_n}x_n)= t^kP(x_1,\ldots,x_n)$ for some $k$ and all $t\in\R$, $x=(x_1,\ldots,x_n)\in\R^n$.
The corresponding differential operator with constant coefficients $L=P(\frac{\partial}{\partial x})=P(\frac{\partial}{\partial x_1},\ldots,\frac{\partial}{\partial x_n})$ is then also called weighted homogeneous.
The Dirac and Laplace operator are examples for weighted homogeneous differential operators with weight $w=(1,\ldots,1)$ as well as the heat operator (weight $w=(2,1,\ldots,1)$).

We can now state the following Liouville type theorem.

\begin{thm}[Generalized Liouville Theorem]\label{thm:liouville}
Let $L$ be a weakly hypoelliptic operator over $\R^n$.
Suppose that $L$ has constant coefficients in $N'\times N$-matrices and is weighted homogeneous.

Then each bounded function in $\H(\R^n,L)$ must be constant.
\end{thm}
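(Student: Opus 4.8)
The plan is to combine the scale invariance coming from weighted homogeneity with the Generalized Montel Theorem (Theorem~\ref{thm:montel}). Fix a bounded $u\in\H(\R^n,L)$, say $|u|\le C_0$ everywhere, and an arbitrary point $x_0\in\R^n$; it suffices to show that all first-order partial derivatives of $u$ vanish at $x_0$, i.e.\ that $\na u(x_0)=0$, because $\R^n$ is connected. For $t>0$ write $\delta_t$ for the dilation $\delta_t(x):=(t^{w_1}x_1,\ldots,t^{w_n}x_n)$ and define the rescaled section $u_t(x):=u(x_0+\delta_t(x))$.

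The first step is to check that $u_t$ again belongs to $\H(\R^n,L)$. Writing $L=P(\partial/\partial x)=\sum_\alpha c_\alpha\partial^\alpha$, weighted homogeneity $P(\delta_t\xi)=t^kP(\xi)$ amounts to the condition $w\cdot\alpha=k$ for every multi-index $\alpha$ with $c_\alpha\ne0$. Since the chain rule gives $\partial^\alpha u_t(x)=t^{w\cdot\alpha}(\partial^\alpha u)(x_0+\delta_t(x))$, one obtains
\[
Lu_t(x)=\sum_\alpha c_\alpha\,t^{w\cdot\alpha}(\partial^\alpha u)(x_0+\delta_t(x))=t^k\,(Lu)(x_0+\delta_t(x))=0.
\]
By weak hypoellipticity $u$, and hence $u_t$, is smooth, so $u_t\in\H(\R^n,L)$; moreover $|u_t|\le C_0$ everywhere. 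In particular the sequence $u_1,u_2,u_3,\ldots$ obtained by taking $t=1,2,3,\ldots$ is locally $L^1$-bounded.

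Next I would apply Theorem~\ref{thm:montel} to this sequence: some subsequence $u_{t_j}$ (with $t_j\to\infty$ through positive integers) converges in the $C^\infty$-topology to some $v\in\H(\R^n,L)$, so in particular $\na u_{t_j}(0)\to\na v(0)$, a finite quantity. On the other hand, differentiating the definition of $u_t$ at the origin gives $\frac{\partial}{\partial x_l}u_t(0)=t^{w_l}\,(\partial_l u)(x_0)$, so $\bigl|\tfrac{\partial}{\partial x_l}u_{t_j}(0)\bigr|=t_j^{\,w_l}\,|(\partial_l u)(x_0)|$. Since $w_l>0$, the right-hand side tends to $\infty$ as $j\to\infty$ unless $(\partial_l u)(x_0)=0$; convergence of the left-hand side therefore forces $(\partial_l u)(x_0)=0$ for every $l$, that is, $\na u(x_0)=0$. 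As $x_0$ was arbitrary, $u$ is constant.

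The one point that needs genuine care is the bookkeeping in the first step — that weighted homogeneity really pins $w\cdot\alpha$ down to the single value $k$ on the support of $P$, which is exactly what makes the dilation $\delta_t$ carry solutions of $Lu=0$ to solutions. Beyond that the argument is the classical Liouville mechanism: one must scale so that first derivatives at the base point get amplified (letting $t\to\infty$, not $t\to0$) while the sup-norm stays fixed, and then a bounded limit is possible only if those derivatives already vanished. (One could also avoid Theorem~\ref{thm:montel} and feed $u_t$ directly into the hypoelliptic estimate \eqref{eq:EllEst} for a fixed pair $K=\overline{B_1(0)}\subset\Omega=B_2(0)$: a change of variables shows $\|u_t\|_{L^1(\Omega)}\le C_0\cdot\vol(B_2(0))$ uniformly in $t$, so $|\na u_t(0)|$ stays bounded as $t\to\infty$ and the same conclusion follows.)
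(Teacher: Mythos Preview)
Your proof is correct and rests on the same mechanism as the paper's: weighted homogeneity makes the anisotropic dilations $\delta_t$ preserve $\H(\R^n,L)$, boundedness gives a uniformly bounded family, and Theorem~\ref{thm:montel} supplies the needed compactness. The only real difference is the endgame. The paper scales about the origin, extracts a locally uniformly convergent subsequence $u_{\eps_j}\to v$, notes that $u_\eps(x_\eps)=u(x)$ with $x_\eps:=\delta_\eps(x)\to 0$, and concludes $u(x)=v(0)=u(0)$ directly --- no derivatives, no varying base point. You instead scale about each $x_0$ and use the $C^1$-part of the Montel convergence to force $\partial_l u(x_0)=0$. Both are valid; the paper's version is a touch slicker (only $C^0$-convergence is used and a single center suffices), while yours is closer in spirit to the classical derivative-bound proof of Liouville's theorem and makes the role of the hypoelliptic estimate \eqref{eq:EllEst} more transparent, as your parenthetical alternative shows.
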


\begin{proof}
Let $u\in\H(\R^n,L)$ be bounded.
For $\eps>0$ we put $u_\eps(x):=u(\eps^{-w_1} x_1,\ldots,\eps^{-w_n} x_n)$.
Since $u$ is bounded, the family $(u_\eps)$ is uniformly bounded.
Moreover, $(Lu_\eps)(x)=\eps^{-k} Lu(\eps^{-w_1} x_1,\ldots,\eps^{-w_n} x_n)=0$ so that $u_\eps\in\H(\R^n,L)$.
By Theorem~\ref{thm:montel}, there is a sequence $\eps_j\searrow 0$ such that $u_{\eps_j}$ converges locally uniformly to some $v\in\H(\R^n,L)$.
We observe $u_\eps(0)=u(0)$ and hence $v(0)=u(0)$.

Fix $x\in\R^n$.
For $\eps>0$ we put $x_\eps := (\eps^{w_1} x_1,\ldots,\eps^{w_n} x_n)$.
Then $u_\eps(x_\eps)=u(x)$ and $x_\eps \to 0$ as $\eps \searrow 0$.
Locally uniform convergence yields $u(x)=u_{\eps_j}(x_{\eps_j}) \to v(0) = u(0)$, hence $u(x)=u(0)$, so $u$ is constant.
\end{proof}

\begin{ex}
We directly recover the classical Liouville theorems for holomorphic and for harmonic functions.
In the case of bounded harmonic functions Nelson gave a particularly short proof based on the mean value property \cite{N}.
In fact, for harmonic functions it suffices to assume that they are bounded from below (or from above) \cite[Thm.~3.1]{ABR}.
This cannot be deduced from Theorem~\ref{thm:liouville} but the theorem also applies to biharmonic functions on $\R^n$ or to solutions of higher powers of $\Delta$.
The function $u(x)=|x|^2$ is biharmonic, bounded from below and nonconstant.
Hence unlike for harmonic functions we need to assume boundedness from above and from below to conclude that a biharmonic function is constant.

Similarly, bounded harmonic spinors on $\R^n$ must be constant.
\end{ex}

\begin{rem}
Here is a silly argument why all bounded polynomials on $\R^n$ must be constant.
Given such a polynomial $u$ choose $\ell\in\N$ larger than half the degree of $u$.
Then $\Delta^\ell u=0$ and Theorem~\ref{thm:liouville} applies.
\end{rem}

\begin{ex}
Theorem~\ref{thm:liouville} also applies to the heat operator.
Bounded solutions to the heat equation on $\R^n=\R\times\R^{n-1}$ must be constant.
Note that there do exist nontrivial solutions on $\R^n$ which vanish for $x_1\leq 0$ \cite[pp.~211-213]{J}.
They are unbounded on $\R^{n-1}$ for each $x_1>0$ however.

Moreover, Theorem~\ref{thm:liouville} applies to powers of the heat operator.
So, for instance, bounded solutions to 
\[
\left(\frac{\partial}{\partial x_1}-\sum_{j=2}^n\frac{\partial^2}{\partial x_j^2}\right)^2 u =0
\] 
must be constant.
\end{ex}

\begin{rem}
Theorem~\ref{thm:liouville} does not hold for hyperbolic operators.
The function $u(x_1,x_2)=\sin(x_1)\sin(x_2)$ is non-constant, bounded and solves the wave equation $\frac{\partial^2 u}{\partial x_1^2} - \frac{\partial^2 u}{\partial x_2^2}=0$.
Thus Theorem~\ref{thm:liouville} does not extend to partially hypoelliptic operators in the sense of G\aa rding and Malgrange \cite{GM}.
\end{rem}

\begin{thm}[Generalized Liouville Theorem, $L^p$-version]\label{thm:Lp-liouville}
Let $1\leq p < \infty$.
Let $L$ be a weakly hypoelliptic operator over $\R^n$.
Suppose that $L$ has constant coefficients in $N'\times N$-matrices and is weighted homogeneous.
Then 
\[
\H(\R^n,L) \cap L^p(\R^n,\R^{N}) = \{0\}.
\]
\end{thm}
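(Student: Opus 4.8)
The plan is to mimic the proof of Theorem~\ref{thm:liouville}, replacing the boundedness hypothesis with $L^p$-control and exploiting how the $L^p$-norm of the rescaled functions degenerates. Let $u\in\H(\R^n,L)\cap L^p(\R^n,\R^N)$, and as before set $u_\eps(x):=u(\eps^{-w_1}x_1,\ldots,\eps^{-w_n}x_n)$, so that $u_\eps\in\H(\R^n,L)$ by weighted homogeneity. First I would compute the effect of the rescaling on the $L^p$-norm: with $|w|:=w_1+\cdots+w_n$, the change of variables $y_i=\eps^{-w_i}x_i$ gives $\vol(dx)=\eps^{|w|}\vol(dy)$, hence $\|u_\eps\|_{L^p(\R^n)}^p=\eps^{|w|}\|u\|_{L^p(\R^n)}^p$. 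In particular, for every fixed compact set $K$ we have $\|u_\eps\|_{L^p(K)}\leq\|u_\eps\|_{L^p(\R^n)}=\eps^{|w|/p}\|u\|_{L^p(\R^n)}\to 0$ as $\eps\searrow 0$, and since local $L^p$-boundedness implies local $L^1$-boundedness (as noted after the definition of locally $L^p$-bounded), the family $(u_\eps)$ is in particular locally $L^1$-bounded.

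Next I would invoke the generalized Montel theorem (Theorem~\ref{thm:montel}) to extract a sequence $\eps_j\searrow 0$ with $u_{\eps_j}\to v$ in the $C^\infty$-topology for some $v\in\H(\R^n,L)$. The key point is to identify $v$: for any compact $K$, locally uniform (hence $L^p(K)$-) convergence gives $\|v\|_{L^p(K)}=\lim_j\|u_{\eps_j}\|_{L^p(K)}=0$ by the computation above, so $v$ vanishes on $K$; since $K$ was arbitrary, $v\equiv 0$. Finally, to conclude $u\equiv 0$, fix $x\in\R^n$, set $x_\eps:=(\eps^{w_1}x_1,\ldots,\eps^{w_n}x_n)$, and observe $u_\eps(x_\eps)=u(x)$ while $x_{\eps_j}\to 0$. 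By locally uniform convergence, $u(x)=u_{\eps_j}(x_{\eps_j})\to v(0)=0$, so $u(x)=0$. As $x$ was arbitrary, $u\equiv 0$.

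I do not expect a serious obstacle here — the argument is essentially the same scaling trick as in Theorem~\ref{thm:liouville}, with the role of ``$u$ bounded, so $u(0)=v(0)$'' replaced by ``$\|u_\eps\|_{L^p(K)}\to 0$, so $v=0$''. The only point requiring a little care is the passage from $C^\infty$-convergence on $K$ to convergence of the $L^p(K)$-norms, which follows since uniform convergence on the relatively compact set $K$ implies $L^p(K)$-convergence; and the routine bookkeeping of the Jacobian factor $\eps^{|w|}$ in the change of variables. Note that unlike in Theorem~\ref{thm:liouville} we need no set of uniqueness and no appeal to the Vitali theorem, since the limit $v$ is pinned down to be zero directly.
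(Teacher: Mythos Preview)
Your proof is correct and follows essentially the same approach as the paper: rescale, compute the $L^p$-norm via the Jacobian, apply the generalized Montel theorem, and use the $x_\eps$ trick. The only cosmetic difference is that you use $\|u_\eps\|_{L^p}\to 0$ to identify the Montel limit $v$ as zero directly, whereas the paper merely uses $\|u_\eps\|_{L^p}\le\|u\|_{L^p}$ to rerun the Theorem~\ref{thm:liouville} argument and conclude $u$ is constant, then notes a constant $L^p$-function vanishes.
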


\begin{proof}
Let $u\in\H(\R^n,L) \cap L^p(\R^n,\R^N)$.
For $\eps\in(0,1]$ define $u_\eps$ as in the proof of Theorem~\ref{thm:liouville}.
We use the same notation as in that proof.
From
\begin{align*}
\|u_\eps\|_{L^p(\R^n)}^p
&=
\int_{\R^n} |u(\eps^{-w_1} x_1,\ldots,\eps^{-w_n} x_n)|^p\,d x_1 \cdots d x_{n} \\
&=
\int_{\R^n} |u( y_1,\ldots, y_n)|^p\,\eps^{w_1 + \cdots + w_n}\,d y_1 \cdots d y_{n} \\
&\le
\|u\|_{L^p(\R^n)}^p < \infty
\end{align*}
we see that $(u_\eps)_\eps$ is an $L^p$-bounded family on $\R^n$.
Using Theorem~\ref{thm:montel} as in the proof of Theorem~\ref{thm:liouville} we find that $u$ is constant.
Since $u$ is also $L^p$ it must be zero.
\end{proof}

\begin{rem}
In the case of scalar constant coefficient hypoelliptic operators Theorems~\ref{thm:liouville} and \ref{thm:Lp-liouville} can also be seen as follows:
If the polynomial $P$ had a zero $x\neq0$, then $P$ would vanish along the curve $t\mapsto (t^{w_1}x_1,\ldots,t^{w_n}x_n)$ by homogeneity.
This would violate H\"ormander's hypoellipticity criterion \cite[Thm.~3.3.I]{H01} for $L=P(\frac{\partial}{\partial x})$.
Thus $x=0$ is the only zero of $P$.
Now \cite[Thm.~2.28]{ES} (whose proof is a simple application of the Fourier transform) says that any solution $u\in\H(\R^n,L)$ must be a polynomial.
If it is in $L^\infty(\R^n)$ or in $L^p(\R^n)$ it must be constant or vanish, respectively.
\end{rem}

\section{Removable Singularities}

Let $M$ be a Riemannian manifold and denote the Riemannian distance of $x,y\in M$ by $d(x,y)$.
For a subset $S\subset M$ let $d(x,S) := \inf_{y\in S}d(x,y)$.
For $r>0$ we denote by
\[
N(S,r) := \{x\in M\mid d(x,S)\le r\} \setminus S
\]
the closed $r$-neighborhood of $S$ with $S$ removed.

\begin{thm}\label{thm:riemann}
Let $S\subset M$ be an embedded submanifold of codimension $m\ge 1$.
Let $L$ be a weakly hypoelliptic operator of order $k\ge 1$ over $M$.
Let $u\in\H(M\setminus S,L)$.
Suppose that for each compact subset $K\subset M$ there exists a constant $C>0$ such that for all sufficiently small $r>0$
\[
\|u\|_{L^1(N(S,r)\cap K)} = \oo(r^{k}) \mbox{ as } r\searrow 0 .
\]
Then $u$ extends uniquely to some $\bar u\in\H(M,L)$.
\end{thm}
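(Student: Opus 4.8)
The plan is to reduce the statement to a local one near $S$ and then show that the distributional extension of $u$ by, say, zero across $S$ (or indeed any fixed extension, since the ambiguity will turn out to be a distribution supported on $S$) satisfies $L\bar u = 0$ in the distributional sense on all of $M$; weak hypoellipticity then upgrades $\bar u$ to a smooth section in $\H(M,L)$, and uniqueness is immediate because two such extensions agree off the set $S$ of measure zero, hence everywhere by continuity. First I would note that $u\in C^\infty(M\setminus S,E)$ by weak hypoellipticity, and that the $L^1$-bound guarantees $u\in L^1_{\mathrm{loc}}(M,E)$: on a compact $K$, the mass of $|u|$ on $N(S,r)\cap K$ is $\oo(r^k)\to 0$ as $r\searrow 0$, while $u$ is bounded on the complement of any fixed neighborhood of $S$ by smoothness, so $u$ defines a distribution $\bar u\in\D'(M,E)$ via integration.

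The heart of the argument is the estimate $L\bar u = 0$ in $\D'(M,E)$. Fix $\phi\in\D(M,F^*)$; we must show $\bar u[L^*\phi] = 0$. Choose a smooth cutoff family $\chi_r:M\to[0,1]$ with $\chi_r\equiv 0$ on $N(S,r/2)\cup S$, $\chi_r\equiv 1$ outside $N(S,r)$, and $|\na^j\chi_r|\le C_j r^{-j}$ for $j\le k$; such functions exist because $S$ is an embedded submanifold, using the tubular-neighborhood distance function. Then on $\supp(\chi_r)$ the section $u$ is smooth and $Lu=0$, so
\[
0 = \int_M \<L(\chi_r\,\text{[irrelevant]}),\ \cdot\>
\]
— more precisely, I would write $\bar u[L^*\phi] = \lim_{r\searrow0}\int_M \<u,\chi_r L^*\phi\>\vol$ (dominated convergence, since $\chi_r\to 1$ a.e.\ and $u\in L^1_{\mathrm{loc}}$ against the bounded $L^*\phi$), and then, since $\chi_r\phi$ is a legitimate test section supported away from $S$ and $Lu=0$ there,
\[
0 = \int_M \<Lu,\chi_r\phi\>\vol = \int_M \<u,L^*(\chi_r\phi)\>\vol.
\]
Expanding $L^*(\chi_r\phi)$ by the Leibniz rule gives $\chi_r L^*\phi$ plus a sum of terms each carrying at least one derivative of $\chi_r$; every such term is supported in $N(S,r)\setminus N(S,r/2)$, is bounded by $C\,r^{-j}$ with $1\le j\le k$ times derivatives of $\phi$, and is paired against $u$. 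Hence the error is bounded by $C\,r^{-k}\,\|u\|_{L^1(N(S,r)\cap K)} = r^{-k}\cdot\oo(r^k) = \oo(1)$, where $K$ is a compact neighborhood of $\supp\phi$. Letting $r\searrow 0$ yields $\bar u[L^*\phi]=\lim_r\int\<u,\chi_r L^*\phi\>\vol = \lim_r\left(-\text{error}\right) = 0$, as desired.

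Finally, $L\bar u=0$ and weak hypoellipticity of $L$ give $\bar u\in C^\infty(M,E)\cap\H(M,L)$; since $\bar u$ agrees with the smooth section $u$ on the open dense set $M\setminus S$ and both are continuous, $\bar u|_{M\setminus S}=u$, so $\bar u$ extends $u$. For uniqueness, if $\bar u_1,\bar u_2\in\H(M,L)$ both restrict to $u$ off $S$, then $\bar u_1-\bar u_2$ is a smooth section vanishing on the dense set $M\setminus S$, hence identically zero. The main obstacle is the construction of the cutoffs $\chi_r$ with the correct derivative bounds $|\na^j\chi_r|=\OO(r^{-j})$ uniformly up to order $k$ and the careful bookkeeping of the Leibniz terms so that each acquires the factor $r^{-j}$ with $j\ge 1$ matched against the $\oo(r^k)$ mass bound — this is exactly where the codimension hypothesis $m\ge 1$ (so that $N(S,r)$ has small volume) and the order-$k$ vanishing rate conspire; everything else is soft.
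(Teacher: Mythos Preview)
Your proposal is correct and follows essentially the same route as the paper: extend $u$ across $S$ as an $L^1_{\mathrm{loc}}$ function (the paper phrases this as $\bar u[\phi]:=\lim_{r\searrow0}u[\chi_r\phi]$, but this is the same object), then use cutoffs $\chi_r(x)=\chi(d(x,S)/r)$ with $|\nabla^j\chi_r|=\OO(r^{-j})$ together with the Leibniz expansion of $L^*(\chi_r\phi)$ to show that the commutator terms are bounded by $r^{-k}\cdot\|u\|_{L^1(N(S,r)\cap K)}=\oo(1)$, whence $L\bar u=0$ distributionally and weak hypoellipticity finishes. One small remark: the codimension hypothesis $m\ge1$ is used only to ensure that $M\setminus S$ is dense (for uniqueness and for $u\in L^1_{\mathrm{loc}}$); the ``small volume of $N(S,r)$'' you mention is not needed here, since the $\oo(r^k)$ mass bound is already assumed --- that volume estimate is what drives the Corollary, not the Theorem.
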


\begin{proof}
Uniqueness of the extension is clear because $M\setminus S$ is dense in $M$.
To show existence let $\chi:\R\to\R$ be a smooth function such that 
\begin{itemize}
\item $\chi\equiv 0$ on $(-\infty,1/2]$;
\item $\chi\equiv 1$ on $[1, \infty)$;
\item $0\leq \chi\leq 1$ everywhere.
\end{itemize}
For $r>0$ We define $\chi_r\in C^0(M)$ by
\[
\chi_r(x) := \chi(d(x,S)/r).
\]
Given a compact subset $K\subset M$ the function $\chi_r$ is smooth in a neighborhood of $K$ provided $r$ is small enough.
This is true because the function $x\mapsto d(x,S)$ is smooth on an open neighborhood of $S$ with $S$ removed.

We extend $u$ to a distribution $\bar u\in \D'(M,E)$:
Let $\phi\in \D(M,E^*)$ be a test section.
The compact support of $\phi$ is denoted by $K$.
For $r>0$ sufficiently small $\chi_r\phi\in\D(M\setminus S,E^*)$.
We put
\[
\bar u[\phi] := \lim_{r\searrow 0} u[\chi_r\phi].
\]
The limit exists because for $0<r_1 \le r_2$
\begin{align*}
|u[\chi_{r_1}\phi] - u[\chi_{r_2}\phi]| 
&\leq
\int_{N(S,r_2)\cap K} |u(x)|\cdot |\chi_{r_1}(x)-\chi_{r_2}(x)|\cdot |\phi(x)|\,\vol(x) \\
&\leq
2\cdot\|\phi\|_{C^0(K)}\cdot\|u\|_{L^1(N(S,r_2)\cap K)}  \\
&=
\oo(r_2^k) 
\mbox{ as } r_2 \searrow 0.
\end{align*}
We check that $\bar u$ is a distribution.
Fix a compact subset $K\subset M$.
Then, choosing $r_0>0$ sufficiently small, we obtain
\begin{align*}
\|u\|_{L^1(K\setminus S)}
&=
\|u\|_{L^1(N(S,r_0)\cap K)} + \|u\|_{L^1(\overline{K\setminus N(S,r_0)})} 
< 
\infty .
\end{align*}
Here the first summand is finite because of the assumption in the theorem and the second because $\overline{K\setminus N(S,r_0)}$ is a compact subset of $M\setminus S$.
Hence we find for all $\phi\in\D(M,E^*)$ with $\supp(\phi)\subset K$:
\[
|\bar u[\phi]|
=
\lim_{r\searrow 0} \left| \int_{K\setminus S} \<u(x),\chi_r(x)\phi(x)\> \vol(x) \right|
\le
 \|u\|_{L^1(K\setminus S)} \cdot \|\phi\|_{C^0(K)},
\]
so $\bar u$ is continuous in $\phi$.

It remains to show that $\bar u$ solves $L\bar u=0$ in the distributional sense.
For $\phi\in\D(M,E^*)$ we compute
\begin{align*}
\bar u[L^*\phi]
&=
\lim_{r\searrow 0} \int_{M\setminus S} \<u(x),\chi_r(x)(L^*\phi)(x)\> \vol(x) \\
&=
\lim_{r\searrow 0} \int_{M\setminus S} \<L(\chi_ru)(x),\phi(x)\> \vol(x) \\
&=
\lim_{r\searrow 0} \int_{M\setminus S} \Big\langle\chi_rLu(x)+ \sum_{j=0}^{k-1}P_j(\chi_r)u(x),\phi(x)\Big\rangle \vol(x) \\
&=
\lim_{r\searrow 0}\sum_{j=0}^{k-1} \int_{M\setminus S} \<P_j(\chi_r)u(x),\phi(x)\> \vol(x) \\
&=
\lim_{r\searrow 0}\sum_{j=0}^{k-1} \int_{M\setminus S} \<u(x),P_j(\chi_r)^*\phi(x)\> \vol(x)
\end{align*}
where $P_j(\chi_r)$ is a linear differential operator of order $j$ for each fixed $r$.
It is obtained from the general Leibniz rule.
The coefficients of $P_j(\chi_r)$ depend linearly on $\chi_r$ and its derivatives up to order $k-j$.
Since $\chi_r$ is constant outside $N(S,r)\setminus N(S,r/2)$ the coefficients of $P_j(\chi_r)$ are supported in $N(S,r)\setminus N(S,r/2)$.
For this reason the integration by parts above is justified; there are no boundary terms.
We find
\begin{align*}
\left| \int_{M\setminus S} \<u(x),P_j(\chi_r)^*\phi(x)\> \vol(x) \right|
&\le
C \cdot \|\chi_r\|_{C^{k-j}(K)} \cdot \|\phi\|_{C^j(K)} \cdot \|u\|_{L^1(N(S,r)\cap K)} \\
&\le
C' \cdot r^{j-k} \cdot \|\phi\|_{C^j(K)} \cdot \oo(r^k) \\
&=
\oo(r^j) 
\mbox{ as } r\searrow 0.
\end{align*}
Hence $\bar u[L^*\phi]=0$, i.e., $L\bar u=0$ in the distributional sense.
By weak hypoellipticity of $L$ the extension $\bar u$ must be smooth and solves $L\bar u=0$ in the classical sense.
\end{proof}

\begin{cor}\label{cor:riemann}
Let $L$ be a weakly hypoelliptic operator of order $k\ge 1$ over $M$.
Let $S\subset M$ be an embedded submanifold of codimension $m\ge k+1$.
Let $u\in\H(M\setminus S,L)$ be locally bounded near $S$.

Then $u$ extends uniquely to some $\bar u\in\H(M,L)$.
\end{cor}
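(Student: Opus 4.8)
The plan is to obtain this as an immediate consequence of Theorem~\ref{thm:riemann}. The hypotheses on $L$, $S$ and $u$ coincide with those of that theorem except that here we only assume $u$ to be locally bounded near $S$, whereas Theorem~\ref{thm:riemann} requires the quantitative decay $\|u\|_{L^1(N(S,r)\cap K)}=\oo(r^{k})$ on every compact $K\subset M$. So the whole content of the argument is to check that local boundedness, together with the codimension assumption $m\ge k+1$, implies this decay; one then quotes Theorem~\ref{thm:riemann} and the claim follows, including the uniqueness of $\bar u$.

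First I would fix a compact subset $K\subset M$. Local boundedness of $u$ near $S$ means that each point of $S$ has a neighborhood in $M$ on which $u$ is bounded; covering the relatively compact set $\overline{S}\cap K$ by finitely many such neighborhoods produces an open set $U$ with $\overline S\cap K\subset U$ and a constant $C_0$ with $|u|\le C_0$ on $U\setminus S$. A short compactness argument shows $N(S,r)\cap K\subset U$ once $r$ is small enough, so that
\[
\|u\|_{L^1(N(S,r)\cap K)}\ \le\ C_0\cdot\vol\bigl(N(S,r)\cap K\bigr)
\]
for all sufficiently small $r>0$.

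Next I would bound the volume of the tubular neighborhood. By the tubular neighborhood theorem (equivalently, working in coordinates adapted to $S$, in which $S=\{x_{n-m+1}=\cdots=x_n=0\}$ and $d(\cdot,S)$ is comparable to $|(x_{n-m+1},\ldots,x_n)|$), the set $N(S,r)\cap K$ is contained, for small $r$, in a set fibering over a relatively compact piece of $S$ with fibers of $m$-dimensional volume $\OO(r^m)$; integrating over the base, which has finite $(n-m)$-dimensional volume, yields $\vol(N(S,r)\cap K)=\OO(r^m)$. Since $m\ge k+1$ we conclude
\[
\|u\|_{L^1(N(S,r)\cap K)}\ =\ \OO(r^m)\ =\ \OO(r^{k+1})\ =\ \oo(r^{k})\qquad\text{as }r\searrow 0,
\]
which is exactly the hypothesis of Theorem~\ref{thm:riemann}. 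That theorem then provides the desired extension $\bar u\in\H(M,L)$.

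The only step that is not purely formal is the volume estimate $\vol(N(S,r)\cap K)=\OO(r^m)$, and I expect this to be the main, though mild, obstacle; it is handled exactly by the tubular neighborhood theorem, which is already implicitly invoked in the proof of Theorem~\ref{thm:riemann} through the smoothness of $x\mapsto d(x,S)$. One should also take minor care if $S$ fails to be a closed subset of $M$, but since only the compact piece $\overline S\cap K$ and small neighborhoods of it enter the argument, the local description of a tubular neighborhood suffices.
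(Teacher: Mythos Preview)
Your proposal is correct and follows essentially the same route as the paper: bound $\|u\|_{L^1(N(S,r)\cap K)}$ by the sup of $|u|$ times $\vol(N(S,r)\cap K)=\OO(r^m)$, then use $m\ge k+1$ to get $\oo(r^k)$ and invoke Theorem~\ref{thm:riemann}. You supply more detail on the local-boundedness bookkeeping and the tubular-neighborhood volume estimate than the paper does, but the argument is the same.
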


\begin{proof}
Since $u$ is locally bounded near $S$ we have for any compact subset $K\subset M$
\begin{align*}
\|u\|_{L^1(N(S,r)\cap K} 
&\le 
\|u\|_{L^\infty(K\setminus S)} \cdot \vol(N(S,r)\cap K)  
\le 
C\cdot \|u\|_{L^\infty(K\setminus S)} \cdot r^m 
= \OO(r^m)
\end{align*}
as $r\searrow 0$.
Since $m\ge k+1$ we get $\|u\|_{L^1(N(S,r)\cap K}=\OO(r^{k+1})$ and therefore $\|u\|_{L^1(N(S,r)\cap K}=\oo(r^{k})$ as $r\searrow 0$.
Theorem~\ref{thm:riemann} yields the claim.
\end{proof}

\begin{ex}
Let $M\subset \C^n$ be an open subset and $S\subset M$ a complex submanifold of complex codimension $\ge 1$.
Then any holomorphic function $u$ on $M\setminus S$, locally bounded near $S$, extends uniquely to a holomorphic function on $M$.
This is Corollary~\ref{cor:riemann} with $k=1$ and $m=2$.
It is classically known as \emph{Riemann's first removable singularity theorem} \cite[Thm.~4.2.1]{S}.

Note that $S$ being a complex submanifold is actually irrelevant; any real submanifold of real codimension $2$ will do.
Moreover, by Theorem~\ref{thm:riemann} one can relax the condition that $u$ be locally bounded near $S$.
A local estimate of the form $|u(x)| \le C\cdot d(x,S)^{-\alpha}$ with $\alpha<1$ is sufficient.
This criterion is sharp because for $M=\C$, $S=\{0\}$, $L=\partial/\partial \bar z$ and $u(z)=1/z$ we have a solution of $Lu=0$ on $M\setminus S$ which satisfies $|u(z)| = d(x,S)^{-1}$ but does not extend across $S$.
\end{ex}

\end{document}